\numberwithin{equation}{section}
\theoremstyle{plain}
\newtheorem{theorem}{Theorem}[section]
\newtheorem{lemma}[theorem]{Lemma}
\newtheorem{example}[theorem]{Example}
\newtheorem{proposition}[theorem]{Proposition}
\theoremstyle{definition}
\newtheorem{condition}[theorem]{Condition}
\newtheorem{definition}[theorem]{Definition}
\newtheorem{assumption}[theorem]{Assumption}
\theoremstyle{remark}
\newtheorem{remark}[theorem]{Remark}
\newcommand\bdf{\begin{definition}}
\newcommand\bpr{\begin{proposition}}
\newcommand\brk{\begin{remark}}
\newcommand\blm{\begin{lemma}}
\newcommand\bexe{\begin{exercise}}
\newcommand\bexa{\begin{example}}
\newcommand\beqn{\begin{eqnarray*}}
\newcommand\edf{\end{definition}}
\newcommand\epr{\end{proposition}}
\newcommand\erk{\end{remark}}
\newcommand\elm{\end{lemma}}
\newcommand\eexe{\end{exercise}}
\newcommand\eexa{\end{example}}
\newcommand\eeqn{\end{eqnarray*}}
\newcommand{\lmt}[2]{\mathop{\lim}_{{#1} \rightarrow {#2}} }
\newcommand{\lmts}[2]{\mathop{\overline{\lim}}_{{#1} \rightarrow {#2}} }
\newcommand{\lmti}[2]{\mathop{\underline{\lim}}_{{#1} \rightarrow {#2}} }
\newcommand{\mm}{\mathfrak m}
\newcommand{\ms}{(X,\d,\mm)}
\newcommand{\cdkn}{{\rm CD}(K, N)}
\newcommand{\mcpkn}{{\rm MCP}(K, N)}
\newcommand{\C}{\mathfrak{C}}
\newcommand{\E}{\mathcal{E}}
\newcommand{\N}{\mathbb{N}}
\newcommand{\R}{\mathbb{R}}
\newcommand{\diam}{\mathop{\rm diam}\nolimits} 
\renewcommand{\d}{{\mathrm d}}
\newcommand{\dt}{{\d t}}
\newcommand{\restr}[1]{\lower3pt\hbox{$|_{#1}$}}
\newcommand{\eps}{\varepsilon}
\newcommand{\nchi}{{\raise.3ex\hbox{$\chi$}}}
\title{{\bf Maz'ya\textendash Shaposhnikova meet Bishop\textendash Gromov}
}
\begin{document}

\author{Bang-Xian Han\thanks{School of   Mathematics, Shandong University, hanbx@sdu.edu.cn}
\and Andrea Pinamonti
\thanks{Dipartimento di Matematica, Universit\`a di Trento, andrea.pinamonti@unitn.it}
\and Zhefeng Xu
\thanks{School of   Mathematical Sciences, University of Science and Technology of China,  xzf1998@mail.ustc.edu.cn}
\and Kilian Zambanini
\thanks{Dipartimento di Matematica, Universit\`a di Trento, kilian.zambanini@unitn.it}}

\date{\today}
\maketitle

\begin{abstract}
We find a surprising link between Maz'ya\textendash Shaposhnikova's  well-known asymptotic  formula concerning fractional Sobolev seminorms and the generalized Bishop--Gromov inequality. In the setting of abstract metric measure spaces we prove the validity of a large family of asymptotic formulas concerning non-local energies. Important examples 
which are covered by our approach are for instance Carnot groups, Riemannian manifolds with Ricci curvature bounded from below and non-collapsed RCD spaces. We also extend the classical Maz'ya\textendash Shaposhnikova's formula on Euclidean spaces to a wider class of mollifiers.
\end{abstract}

\maketitle

\textbf{Keywords}:  fractional Sobolev space,  metric measure space,  Bishop--Gromov inequality, Maz'ya--Shaposhnikova formula, Ricci curvature.

\section{Introduction}\label{first section}
Let $N\in \mathbb{N}$, $p\geq 1$ and $0<s<1$. We recall that the fractional Gagliardo seminorm of a measurable function $f:\R^N\to\mathbb{R}$ is defined as
\[
\|f\|_{W^{s, p}(\mathbb{R}^N)}:=\left (\int_{\R^N} \int_{\R^N} \frac{|f(x)-f(y)|^p}{|x-y|^{N+sp}} \,\d\mathcal L^N (x) \d \mathcal L^N (y) \right)^{\frac 1p}.
\]
Here $W^{s, p}(\mathbb{R}^N)$ denotes the classical fractional Sobolev space, i.e. the set of all $L^p$ functions $f$ s.t. $\|f\|_{W^{s, p}(\mathbb{R}^N)}<\infty$.
In the celebrated paper \cite{BBM}, the authors studied the asymptotic behaviour of  $\|\cdot\|_{W^{s, p}(\R^N)}$ as $s\to 1^{-}$ for $p>1$. In particular, they proved that whenever $f\in W^{1,p}(\mathbb{R}^N)$ the following formula holds:
\begin{equation}\label{BBM:intro}
 \mathop{\lim}_{s \uparrow 1} ~(1-s)\|f\|^p_{W^{s, p}(\R^N)}= C  \|  \nabla f\|_{L^p(\R^N)}^p, \tag{BBM}
\end{equation}
where $C>0$ is an explicit constant depending only on $N$ and $p$. Afterwards, D\'avila \cite{MR1942130} generalized this result to functions of bounded variation (BV functions)  and their variation measures.
Later,  Maz'ya and Shaposhnikova \cite{MS} studied the asymptotic behaviour of the Gagliardo seminorm when $s\to 0^+$. They proved that for any $f\in \cup_{0<s<1} W^{s, p}(\R^N)$
 \begin{equation}\label{MS:intro}
 \mathop{\lim}_{s \downarrow 0} ~s\|f\|^p_{W^{s, p}(\R^N)}= {L }  \|  f\|_{L^p(\R^N)}^p, \tag{MS}
\end{equation}
where  $L$ is another positive constant depending only on $p\geq 1$ and  $N\in \N$.

Several different generalizations of these results in Euclidean spaces
have been considered and the literature became extremely vast; a detailed overview is beyond the scope of our contribution. Here, we restrict ourselves to the research that is most close in spirit to \cite{BBM} and \cite{MS}. Interesting contributions in this directions have been provided by Ponce \cite{Ponce}, Leoni--Spector \cite{MR2832587,MR3132740}, Brezis--Nguyen \cite{MR4011115,MR3749763, MR3556344,MR3485124}, 
Brezis--Van Schaftingen--Yung \cite{MR4482042,MR4275122},
Nguyen--Pinamonti--Vecchi--Squassina \cite{Ngu,MR3601583}, Buseghin--Garofalo--Tralli \cite{MR4685023, MR4453966}, Bru\'e--Calzi--Comi--Stefani \cite{MR4449863}, Ali--Lam--Pinamonti \cite{MR3886626,MR3986928},  Arroyo--Rabasa--Bonicatto \cite{MR4507709}, Brazke--Schikorra--Po Lam \cite{MR4525722} and references therein.
On the other hand, in the past three decades, there has been significant progress on the study of various aspects of first order analysis on metric measure spaces including the theory of first order Sobolev spaces, functions of bounded variation and their relation to variational problems and partial differential equations, see e.g. \cite{AT-T} and references therein. 
In \cite[Remark 6]{MR1942116}, Brezis raised a question about the relation between the BBM formula and Sobolev spaces on metric measure spaces. Di Marino--Squassina \cite{DMS19} proved new characterizations of Sobolev and BV spaces in PI spaces in the spirit of the BBM formula, see also \cite{LPZ1,LPZ2}. A similar result was proved previously in Ahlfors-regular spaces by Munnier \cite{MR3299669}. G\'orny \cite{MR4375837} studied the problem in certain PI spaces that ``locally look like"  Euclidean spaces. 
As far as we know the validity of \eqref{MS:intro} in metric measure spaces is still an open problem. The aim of the present paper is to fill this gap.  Moreover, we find an intriguing relation between the validity of \eqref{MS:intro} and suitable geometric properties of the underlying metric space. 

In order to describe this relation we recall that in comparison geometry, no matter in smooth or non-smooth setting,  \emph{Bishop--Gromov type inequalities} play a central role. For curvature parameter $K$ and dimension parameter $N$,  denote by $V^{K, N}(R)$ the volume of the geodesic ball with radius $R$ in the space form. We say that a metric measure space $\ms$ satisfies  the generalized Bishop--Gromov inequality if
\begin{equation}\label{bgi}\tag{BGI}
 \frac {\mm\big (B_R(x_0)\big)} {V^{K, N}(R)} \leq \frac {\mm\big (B_r(x_0)\big)} {V^{K, N}(r)}~~~~\text{for all}~~ R\geq r.
\end{equation}
In the present paper we find a surprising link between the Maz'ya\textendash Shaposhnikova's formula \eqref{MS:intro} and  the generalized Bishop--Gromov inequality \eqref{bgi}. Using this simple,  precise and geometric information we provide several new interesting examples  satisfying such an asymptotic  formula.  
In particular, applying our result to the metric measure spaces with synthetic curvature-dimension condition \`a la Lott\textendash Sturm\textendash Villani,  we  find a couple of new  sharp functional inequalities which, as far as we know, are new even in the smooth setting.\\

Another interesting outcome of the present paper is the extension of the formula \eqref{MS:intro}  with mollifier $\frac{s}{
\d(x, y)^{N+sp}}$, which appears in the Gagliardo
seminorm,  to the case of general radial mollifier satisfying suitable monotonicity assumptions (cf. Condition \ref{assumption3}).
In \cite{Davoli} the authors devised necessary and sufficient conditions on the mollifiers to get formula \eqref{BBM:intro}. The analogous problem for formula \eqref{MS:intro} is open and it will be the subject of future research.


\medskip

\noindent \textbf{Acknowledgement.} The first and  the third author are  supported by the Ministry of Science and Technology of China, through the Young Scientist Programs (No. 2021YFA1000900  and 2021YFA1002200), and NSFC grant (No.12201596). The second and the fourth author are supported by the University of Trento and the MUR-PRIN 2022 - Project: Regularity problems in Sub-Riemannian
structures, Project code: 2022F4F2LH.

\section{Main results}
\subsection{General theory}\label{sect: mainth}
In this paper,  a metric measure space  is a triple $\ms$, where $(X, \d)$ is a complete and separable metric space and  $\mm$ is a locally finite non-negative Borel measure on $X$ with full support.

First of all, we set some general assumptions on the mollifiers. 

\begin{assumption}\label{assumption1}
Let $\ms$ be a metric measure space and let $p\geq 1$. Suppose there exists a sequence of non-negative, symmetric, measurable  functions $(\rho_n)_{n \in \N}$  defined on $\{(x,y)\in X\times X: x\neq y\}$, called mollifiers,  such that 
 \begin{itemize}
\item [A)] there exists a constant $L\geq 0$ such that,  for any $x\in X$,
\begin{equation}\label{eq1:assumption1}
\lmt{R}{+\infty} \lmts{n}{\infty}\int_{B^c_R(x) }  \rho_n(x, y) \,\d \mm(y)=\lmt{R}{+\infty} \lmti{n}{\infty}\int_{B^c_R(x) }  \rho_n(x, y) \,\d \mm(y)=L,
 \end{equation}
 where $B^c_R(x)=\{y\in X: \d(x,  y)\geq R\}$;
 \item [B)]  for any $u \in L^p$ such that there exists $n_0\in\N$ with
\begin{equation*}
\E_{n_0}(u):= \iint_{\{(x, y):x, y\in X, x\neq y\}} {|u(x)-u(y)|^p} \rho_{n_0}(x, y)\,\d\mm(x) \d\mm(y)<+\infty,
\end{equation*}
we  have
 \begin{equation}\label{eq2:assumption1}
 \lmt{n}{\infty} \iint _{\big\{(x, y): 0<\d(x, y)<R\big\}} {|u(x)-u(y)|^p} \rho_n(x, y)\,\d\mm(x) \d\mm(y)=0,~~\forall R>0;
\end{equation}
\item[C)]for any $R>0$ sufficiently large, there exists a constant $C=C(R)$ such that, for any $x\in X$, $n\in \N$,
\begin{equation}\label{eq3:assumption1}
    \int_{B^c_R(x)}\rho_n(x,y)\,\d\mm(y)\leq C.
\end{equation}
\end{itemize}

\end{assumption}
 \medskip
The main result of this paper is the following Maz'ya\textendash Shaposhnikova type formula which is expressed in a very general form.

\begin{theorem}[Generalized  Maz'ya\textendash Shaposhnikova's formula]\label{th:MS2}
 Let $\ms$ be a metric measure space and let $(\rho_n)_{n\in \N}$ be mollifiers satisfying Assumption \ref{assumption1}.
Then, for any $u \in L^p$ such that
$\E_{n_0}(u)<+\infty$ for a certain $n_0\in \N$, it holds
\begin{equation}\label{eq1:MS}\tag{GMS}
\mathop{\lim}_{n\to \infty} \E_n(u)=2L\|u\|^p_{L^p}.
\end{equation}
\end{theorem}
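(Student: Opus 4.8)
The plan is to split each energy $\E_n(u)$ according to whether the two points are close or far apart, to show that the close part is asymptotically negligible, and to identify the far part with a weighted $L^p$-integral whose limit is governed by Assumption \ref{assumption1}A). Fix $R>0$ and write $\E_n(u)=\mathrm{Loc}^R_n(u)+\mathrm{Far}^R_n(u)$, where $\mathrm{Loc}^R_n$ integrates over $\{0<\d(x,y)<R\}$ and $\mathrm{Far}^R_n$ over $\{\d(x,y)\geq R\}$. Since $\E_{n_0}(u)<+\infty$, Assumption \ref{assumption1}B) gives $\lim_{n\to\infty}\mathrm{Loc}^R_n(u)=0$ for every $R$, so that $\limsup_{n\to\infty}\E_n(u)=\limsup_{n\to\infty}\mathrm{Far}^R_n(u)$ and likewise for the $\liminf$; in particular these two quantities do not depend on $R$, and the whole problem is reduced to the far part.

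The heart of the matter is then $\mathrm{Far}^R_n$. I would first record the key algebraic observation: if $v\in L^p$ is supported in a bounded set $S$ and $R>\diam(S)$, then $\d(x,y)\geq R$ forces at most one of $x,y$ to lie in $S$, whence $|v(x)-v(y)|^p=|v(x)|^p+|v(y)|^p$ on $\{\d(x,y)\geq R\}$ and, by symmetry of $\rho_n$,
\[
\mathrm{Far}^R_n(v)=2\int_X |v(x)|^p\, g^R_n(x)\dm(x),\qquad g^R_n(x):=\int_{B^c_R(x)}\rho_n(x,y)\dm(y).
\]
To descend from this special case to a general $u$ I would use that $f\mapsto \mathrm{Far}^R_n(f)^{1/p}$ is a seminorm (it is the $L^p$-norm of $f(x)-f(y)$ against $\mathbf 1_{\{\d\geq R\}}\rho_n\,\d\mm\otimes\d\mm$). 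Given $\eps>0$, fix a ball $S=B_{R_0}(x_0)$ with $\|u-v\|_{L^p}<\eps$ for $v:=u\mathbf 1_S$. By Assumption \ref{assumption1}C) there is a fixed threshold $R_*$ (sufficiently large) with $g^R_n(x)\leq g^{R_*}_n(x)\leq C(R_*)$ for all $R\geq R_*$, $n$ and $x$; hence $\mathrm{Far}^R_n(u-v)\leq 2^p C(R_*)\|u-v\|_{L^p}^p$ uniformly in $n$, and Minkowski's inequality controls $\mathrm{Far}^R_n(u)$ in terms of $\mathrm{Far}^R_n(v)$ up to an error $O(\eps)$.

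Finally I would take the limits in $n$ and in $R$ on the reduced quantity $2\int_X|v|^p g^R_n\dm$. The uniform bound $g^R_n\leq C(R_*)$ dominates $|v|^p g^R_n$ by the integrable function $C(R_*)|v|^p$, so reverse Fatou yields $\limsup_{n\to\infty}\int|v|^p g^R_n\leq \int|v|^p\,\limsup_{n\to\infty} g^R_n$, while Fatou gives the matching lower bound with $\liminf_{n\to\infty} g^R_n$. Assumption \ref{assumption1}A) says precisely that $\limsup_{n\to\infty} g^R_n(x)$ and $\liminf_{n\to\infty} g^R_n(x)$ both converge to $L$ as $R\to\infty$, so a further application of dominated convergence produces $\lim_{R\to\infty}\limsup_{n\to\infty} 2\int|v|^p g^R_n=\lim_{R\to\infty}\liminf_{n\to\infty} 2\int|v|^p g^R_n=2L\|v\|_{L^p}^p$. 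Combining this with the seminorm estimate and letting $\eps\to0$ (so that $\|v\|_{L^p}\to\|u\|_{L^p}$ and the $O(\eps)$ error vanishes) squeezes both $\limsup_{n\to\infty}\E_n(u)$ and $\liminf_{n\to\infty}\E_n(u)$ onto $2L\|u\|_{L^p}^p$, which is the asserted identity.

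The step I expect to be most delicate is the interchange of the limits in $n$ and $R$: Assumption \ref{assumption1}A) only controls the double limit of $\limsup_{n\to\infty} g^R_n$ and $\liminf_{n\to\infty} g^R_n$, and not the pointwise behaviour of $g^R_n$ for a fixed $R$, so one cannot simply pass to the limit inside the integral. The combination of the uniform bound in C) (to dominate) with the one-sided Fatou and reverse-Fatou inequalities is what makes this rigorous; equally important is the bookkeeping of keeping the approximation threshold $R_*$ fixed while sending first $R\to\infty$ and only then $\eps\to0$, so that the constant $C(R_*)$ multiplying the error stays under control.
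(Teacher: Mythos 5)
Your argument is correct, and it takes a genuinely different route from the paper's. The paper fixes a basepoint $x_0$ and splits the off-diagonal set into three regions: close pairs $A=\{0<\d(x,y)<R\}$, far pairs $B$ where one point is at least twice as far from $x_0$ as the other, and far pairs $C$ at comparable distance from $x_0$. On $B$ it proves, via annulus estimates ($\frac12\d(y,x_0)\vee R\leq\d(x,y)<\frac32\d(y,x_0)$) and Fatou, that the cross term $II_a\to0$ and the main term $II_b\to L\|u\|^p_{L^p}$, and then, for $p>1$, it replaces $|u(x)-u(y)|^p$ by $|u(x)|^p$ through a Lagrange-theorem bound combined with Young's inequality and an auxiliary $\eps$; region $C$ is negligible because both points lie in $\{\d(\cdot,x_0)\geq R/3\}$, so the tail of $\|u\|_{L^p}$ kills it. You instead truncate $u$ to a ball $S=B_{R_0}(x_0)$ and exploit the exact identity $|v(x)-v(y)|^p=|v(x)|^p+|v(y)|^p$ on $\{\d(x,y)\geq R\}$ once $R>\diam(S)$, so that the far energy of $v=u\mathbf{1}_S$ is \emph{exactly} $2\int_X|v|^p\,g^R_n\dm$ with $g^R_n(x)=\int_{B^c_R(x)}\rho_n(x,y)\dm(y)$, and you control the truncation error through the seminorm structure of $\mathrm{Far}^R_n(\cdot)^{1/p}$ together with the uniform bound $g^R_n\leq C(R_*)$ from Assumption \ref{assumption1}-C). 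This eliminates the paper's two most technical steps (the $II_a$/$II_b$ annulus computation and the Lagrange--Young perturbation for $p>1$, replaced by a one-line algebraic identity plus Minkowski) at the price of an extra approximation layer, whose bookkeeping you handle correctly: the identity for $v$ needs $R>2R_0$, which your order of limits ($R\to\infty$ first at fixed $\eps$, then $\eps\to0$ with $C(R_*)$ frozen) respects, and the passage between $\E_n(u)$ and $\mathrm{Far}^R_n(u)$ via Assumption \ref{assumption1}-B) is legitimate since, for each fixed $R$, the local part tends to $0$, making $\lmts{n}{\infty}\mathrm{Far}^R_n(u)$ and $\lmti{n}{\infty}\mathrm{Far}^R_n(u)$ independent of $R$. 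Both proofs share the same analytic core — Fatou and reverse Fatou under the domination supplied by C), with A) identifying the double limit — but yours localizes all the difficulty in $u$ itself into a single $L^p$-approximation, whereas the paper works with $u$ directly and pays for it with the pointwise comparison of $|u(x)-u(y)|^p$ against $|u(x)|^p$.
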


\begin{proof}

 Fix $x_0 \in X$ and $R > 0$. We consider the following decomposition of  $X$:
 \begin{equation*}
\left\{
             \begin{array}{l}
             {A:=\big \{(x, y): 0<\d(x, y)< R \big \}} \\
             {B:= \big \{(x, y): \d(x, y)\geq R,  \,\d(y, x_0)>2\d(x, x_0)~\text{or}~\d(y, x_0)< \frac 12 \d(x, x_0) \big \}} \\
               {C:=   \big \{(x, y): \d(x, y)\geq R, \, \frac 12  \d(x, x_0) \leq \d(y, x_0)\leq 2\d(x, x_0)  \big \}}.
             \end{array}  
        \right.
\end{equation*}
so that
 \begin{eqnarray*}
\E_n(u) &=& \underbrace{  \int_{A} {|u(x)-u(y)|^p}{\rho_n(x, y)} \,\d \mm( x)\,\d \mm(y)}_{I:=I(R, n)}\\
&& + \underbrace{  \int_{{B} } {|u(x)-u(y)|^p}{\rho_n(x, y)} \,\d \mm(x)\,\d \mm(y)}_{II:=II(R, n)}\\
&& + \underbrace{ \int_{{C} } {|u(x)-u(y)|^p}{\rho_n(x, y)} \,\d \mm(x)\,\d \mm(y)}_{III:=III(R, n)}.
\end{eqnarray*}
By  Assumption \ref{assumption1}-B) we have
\begin{equation}\label{eq1.1:MS}
\lmt{R}{+\infty}\lmt{n}{+\infty} I(R, n) =0.
\end{equation}
For any  $x, y\in X$ satisfying $\d(y, x_0)>2\d(x, x_0) $,  by the triangle inequality we know 
\[
\d(x, y) \geq \d(y, x_0)-\d(x_0, x) >\d(y, x_0)-\frac 12 \d(y, x_0)=\frac 12 \d(y, x_0)
\]
and
\[
\d(x, y)  \leq \d(x_0, x) +\d(y, x_0) < \frac32 \d(y, x_0).
\]
Therefore
\[
\Big \{x:  \d(x, y)\geq R,~ \d(y, x_0)>2\d(x, x_0) \Big \} \subset  \Big  \{x:  \frac12 \d(y, x_0)\vee R  \leq  \d(x, y)<  \frac32 \d(y, x_0) \Big \}
\]
so that
\begin{eqnarray*}
II_a(R, n)&:=&\int_{X} |u(y)|^p \bigg( \int_{\big \{x: ~\d(x, y)\geq R,~ \d(y, x_0)>2\d(x, x_0) \big \}} {\rho_n(x, y)} \,\d \mm(x) \bigg) \,\d \mm(y)\\
&\leq &\int_{X} {|u(y)|^p} \bigg( \int_{\big  \{x:~  \frac12 \d(y, x_0)\vee R  \leq  \d(x, y)<  \frac32 \d(y, x_0)\big \}}  {\rho_n(x, y)} \,\d \mm(x) \bigg ) \,\d \mm(y)\\
&=&    \int_{X} {|u(y)|^p} \bigg(\int_{ \big  \{x: ~ \d(x, y)\geq \frac12 \d(y, x_0)\vee R \big \}}  {\rho_n(x, y)} \,\d \mm(x)\\
&&-\int_{ \big  \{x:  ~ \d(x, y)\geq \frac32 \d(y, x_0)\vee R \big \}}  {\rho_n(x, y)} \,\d \mm(x)   \bigg)  \,\d \mm(y).
\end{eqnarray*}
Note that, for $R$ sufficiently large, $\rho_n(\cdot,y)$ is integrable on the complementary of each ball centered at $y$ by Assumption \ref{assumption1}-C and the symmetry of $\rho_n$.\\
Moreover, for $R$ sufficiently large, 
\[\begin{split}&\,\int_{ \big  \{x: ~ \d(x, y)\geq \frac12 \d(y, x_0)\vee R \big \}}  {\rho_n(x, y)} \,\d \mm(x)
-\int_{ \big  \{x:  ~ \d(x, y)\geq\frac32 \d(y, x_0)\vee R \big \}}  {\rho_n(x, y)} \,\d \mm(x)\\ \leq & \,\int_{ \big  \{x: ~ \d(x, y)\geq \frac12 \d(y, x_0)\vee R \big \}}  {\rho_n(x, y)} \,\d \mm(x)
+\int_{ \big  \{x:  ~ \d(x, y)\geq \frac32 \d(y, x_0)\vee R \big \}}  {\rho_n(x, y)} \,\d \mm(x)\\
\leq &\, 2\int_{\big\{x: ~ \d(x,y)\geq R\big\}}\rho_n(x,y) \,\d\mm(x)\leq 2C.\end{split}\]
Hence, by Fatou's lemma,
\[\begin{split}
    \lmts{n}{\infty} II_a(R, n)\leq \int_X|u(y)|^p\lmts{n}{\infty}\bigg(&\int_{ \big  \{\d(x, y)\geq \frac12 \d(y, x_0)\vee R \big \}}  {\rho_n(x, y)} \,\d \mm(x)\\ &
-\int_{ \big  \{\d(x, y)\geq\frac32 \d(y, x_0)\vee R \big \}}  {\rho_n(x, y)} \,\d \mm(x)   \bigg)  \,\d \mm(y).
\end{split}\]
By a similar argument, since the map $R\mapsto C(R)$ can be chosen to be non-increasing for large $R$, we get
\begin{small}\[\begin{split}\lmt{R}{+\infty}\lmts{n}{\infty}II_a(R, n)&\leq \int_X|u(y)|^p\lmt{R}{+\infty}\lmts{n}{\infty}\bigg(\int_{ \big  \{\d(x, y)\geq\frac12 \d(y, x_0)\vee R \big \}}  {\rho_n(x, y)} \,\d \mm(x)\\ &\qquad
-\int_{ \big  \{\d(x, y)\geq\frac32 \d(y, x_0)\vee R \big \}}  {\rho_n(x, y)} \,\d \mm(x)   \bigg)   \,\d \mm(y)\\
&\leq \int_X|u(y)|^p\bigg(\lmt{R}{+\infty}\lmts{n}{\infty}\int_{ \big  \{\d(x, y)\geq\frac12 \d(y, x_0)\vee R \big \}}  {\rho_n(x, y)} \,\d \mm(x)\\ &\qquad
-\lmt{R}{+\infty}\lmti{n}{\infty}\int_{ \big  \{\d(x, y)\geq\frac32 \d(y, x_0)\vee R \big \}}  {\rho_n(x, y)} \,\d \mm(x)   \bigg )  \,\d \mm(y).\end{split}\]\end{small}\\
By Assumption \ref{assumption1}-A) we infer
\begin{equation}\label{eq1.11:MS}
\lmt{R}{+\infty} \lmts{n}{\infty} II_a(R, n)=0.
\end{equation}
Similarly, for $x\neq x_0$, it is immediate to see that
\[
B^c_{4\d(x, x_0)}(x) \subset \Big \{y\in X: \d(y, x_0)>2\d(x, x_0) \Big \}\subset B^c_{\d(x, x_0)}(x),
\]
so that
\begin{eqnarray*}
II_b(R, n)&:=& \int_{X} |u(x)|^p \bigg( \int_{ \big \{y:~\d(x,y)\geq R, ~ \d(y, x_0)>2\d(x, x_0) \big \}} {\rho_n(x, y)} \,\d \mm(y) \bigg) \,\d \mm(x)\\
&\geq& \int_{X} |u(x)|^p \bigg( \int_{ B^c_{4\d(x, x_0)\vee R}(x)} {\rho_n(x, y)} \,\d \mm(y) \bigg) \,\d \mm(x)
\end{eqnarray*}
and
\[
 II_b(R,n) \leq \int_{X} |u(x)|^p \bigg( \int_{ B^c_{\d(x, x_0)\vee R}(x)} {\rho_n(x, y)} \,\d \mm(y) \bigg) \,\d \mm(x).
\]
By Assumption \ref{assumption1}-A) and Fatou's lemma again
\[\begin{split}
    L\|u\|_{L^p}^p&=\int_X|u(x)|^p \bigg(\lmt{R}{+\infty}\lmti{n}{\infty}\int_{ B^c_{4\d(x, x_0)\vee R}(x)} {\rho_n(x, y)} \,\d \mm(y) \bigg)\,\d\mm(x)\\ 
   &\leq \lmt{R}{+\infty}\lmti{n}{\infty}II_b(R,n)\leq \lmt{R}{+\infty}\lmts{n}{\infty} II_b(R,n)\\
    &\leq \lmt{R}{+\infty}\lmts{n}{\infty}\int_{X} |u(x)|^p \bigg( \int_{ B^c_{\d(x, x_0)\vee R}(x)} {\rho_n(x, y)} \,\d \mm(y) \bigg) \,\d \mm(x)\\
    &\leq \int_{X} |u(x)|^p \lmt{R}{+\infty}\lmts{n}{\infty} \bigg( \int_{ B^c_{\d(x, x_0)\vee R}(x)} {\rho_n(x, y)} \,\d \mm(y) \bigg) \,\d \mm(x)= L\|u\|_{L^p}^p.
\end{split}\]
Hence
\begin{equation}\label{eq2:MS}
\lmt{R}{+\infty} \lmti{n}{\infty} II_b(R, n) = L\|u\|_{L^p}^p=\lmt{R}{+\infty} \lmts{n}{\infty} II_b(R, n).
\end{equation}
Now note that, by the symmetry of $x$ and $y$, 
\[\begin{split}
    II(R,n)&:=\int_{{B} } {|u(x)-u(y)|^p}{\rho_n(x, y)} \,\d \mm(x)\,\d \mm(y)\\ &=2\int_{D:=\big\{(x,y):\,\d(x,y)\geq R,\, \d(y,x_0)>2\d(x,x_0)\big\}} {|u(x)-u(y)|^p}{\rho_n(x, y)} \,\d \mm(x)\,\d \mm(y).
\end{split}\]
Hence \[\begin{split}
    |II(R,n)-2II_b(R,n)|
    &\leq 2\int_{D} {\Big||u(x)-u(y)|^p-|u(x)|^p\Big|}\,{\rho_n(x, y)} \,\d \mm(x)\,\d \mm(y).
\end{split}\]
If $p=1$, by the triangle inequality, the last integral is bounded by $II_a(R,n)$. Hence, by (\ref{eq1.11:MS}),
\begin{equation}\label{diff}
    \lmt{R}{+\infty} \lmts{n}{\infty} |II(R,n)-2II_b(R,n)|=0.
\end{equation}
In case $p>1$, we use Lagrange theorem and find, for every $x,y\in X$, a number $c\in\R$ lying between $u(x)$ and $u(x)-u(y)$ such that
\begin{equation*}
\Big||u(x)-u(y)|^p-|u(x)|^p\Big|=p\,|c|^{p-1}|u(y)|.
\end{equation*}
In particular $|c|\leq \max\{|u(x)|,|u(x)-u(y)|\}$. Therefore, using also Young's inequality for products, 
\[\begin{split}
    |II(R,n)&-2 II_b(R,n)|\leq 2p \int_D \Big(|u(x)|^{p-1}+|u(x)-u(y)|^{p-1}\Big)|u(y)|\,{\rho_n(x, y)} \,\d \mm(x)\,\d \mm(y)\\
    &\leq 2\int_D\Big(\eps^\frac{1}{p-1}(p-1)(|u(x)|^p+|u(x)-u(y)|^p)+\frac{2}{\eps}|u(y)|^p\Big)\,{\rho_n(x, y)} \,\d \mm(x)\,\d \mm(y)
\end{split}\]
for every $\eps>0$. 
Using Minkowski inequality $|u(x)-u(y)|^p\leq 2^{p-1}(|u(x)|^p+|u(y)|^p)$ and passing to the limits, exploiting (\ref{eq1.11:MS}) and 
(\ref{eq2:MS}) we finally get
\[\lmt{R}{+\infty} \lmts{n}{\infty} |II(R,n)-2II_b(R,n)|\leq 2(p-1)\eps^\frac{1}{p-1}(1+2^{p-1})L\|u\|_{L^p}^p.\]
By arbitrariness of $\eps$, we deduce (\ref{diff}) for every $p\geq 1$. Now we estimate
\[\begin{split}
    \lmts{n}{\infty} II(R,n)&\leq \lmts{n}{\infty}\Big(II(R,n)-2II_b(R,n)\Big)+2\lmts{n}{\infty}(II_b(R,n))\\
    &\leq \lmts{n}{\infty}|II(R,n)-2II_b(R,n)|+2\lmts{n}{\infty}(II_b(R,n)).
\end{split}\]
Passing to the limit as $R\to +\infty$, we get by (\ref{diff}) and (\ref{eq2:MS})
\begin{equation}\label{1est}\lmt{R}{+\infty}\lmts{n}{\infty} II(R,n)\leq 2L\|u\|_{L^p}^p.\end{equation}
Similarly,
\[\begin{split}
-\lmti{n}{\infty} II(R,n)&=\lmts{n}{\infty}(-II(R,n))
\\&\leq \lmts{n}{\infty}\Big(-II(R,n)+2II_b(R,n)\Big)+2\lmts{n}{\infty}(-II_b(R,n))\\&\leq \lmts{n}{\infty}|II(R,n)-2II_b(R,n)|-2\lmti{n}{\infty}(II_b(R,n)).
\end{split}\]
Again, as $R\to +\infty$, we get by (\ref{diff}) and (\ref{eq2:MS})
\begin{equation}\label{2est}\lmt{R}{+\infty}\lmti{n}{\infty}II(R,n)\geq 2L\|u\|_{L^p}^p.\end{equation}
By (\ref{1est}) and (\ref{2est}) we conclude
\begin{equation}\label{eq4:MS}
    \lmt{R}{+\infty}\lmti{n}{\infty}II(R,n)= 2L\|u\|_{L^p}^p= \lmt{R}{+\infty}\lmts{n}{\infty}II(R,n).
\end{equation}
Concerning $III(R,n)$, observe that 
\begin{equation}	\label{10}
\begin{aligned}
C&\subset \left\{(x,y)\in X\times X~|~\d(x,y)\geq R, ~\d(x,x_0)\geq \frac{R}{3}\right\},\\C&\subset \left\{(x,y)\in X\times X~|~\d(x,y)\geq R, ~\d(y,x_0)\geq \frac{R}{3}\right\} .\end{aligned}
\end{equation}
Thus, by Minkowski inequality, Fubini's theorem and the symmetry of $\rho_n$, we deduce
\begin{eqnarray*}
III&= & \int_{{C} } {|u(x)-u(y)|^p}{\rho_n(x, y)} \,\d \mm(x)\,\d \mm(y)\\
&\leq & 2^{p-1}\bigg(\int_C|u(x)|^p\rho_n(x,y)\,\d \mm(x)\,\d\mm(y)+\int_C|u(y)|^p\rho_n(x,y)\,\d \mm(x)\,\d\mm(y)\bigg)\\
&\leq & 2^{p-1}\int_{\d(x, x_0)\geq  \frac R3} |u(x)|^p \bigg ( \int_{\d(x, y)\geq  R}  \rho_{n}(x, y)  \,\d \mm(y)\bigg ) \,\d \mm(x) \\
&+& 2^{p-1}\int_{\d(y, x_0)\geq  \frac R3} |u(y)|^p \bigg ( \int_{\d(x, y)\geq  R}  \rho_{n}(x, y)  \,\d \mm(x)\bigg ) \,\d \mm(y)\\
&=& 2^{p}\int_{\d(x, x_0)\geq  \frac R3} |u(x)|^p \bigg ( \int_{\d(x, y)\geq  R}  \rho_{n}(x, y)  \,\d \mm(y)\bigg) \,\d \mm(x).
\end{eqnarray*}
Applying Fatou's lemma as before,
$$ \lmts{n}{\infty} III(R, n) \leq 2^{p}\int_{\d(x, x_0)\geq  \frac R3} |u(x)|^p  \lmts{n}{\infty}  \bigg ( \int_{\d(x, y)\geq  R}  \rho_{n}(x, y)  \,\d \mm(y)\bigg ) \,\d \mm(x)$$
and 
\begin{equation}\label{eq5:MS}\lmt{R}{+\infty} \lmts{n}{\infty} III(R, n)=0.\end{equation}
Combining \eqref{eq5:MS} with \eqref{eq1.1:MS} and  \eqref{eq4:MS} we get the conclusion.
\end{proof}
\begin{remark}
     In the case of a metric measure space with bounded diameter, Assumption \ref{assumption1} C) is always satisfied with $C=0$ and the same holds for Assumption \ref{assumption1} A) with $L=0$. 
     In particular, in the proof of Theorem \ref{th:MS2} we can fix $R>\diam(X)$, so that $B=C=\emptyset$ and 
\[\begin{split}\E_n(u)&:= \iint_{\{(x, y):x, y\in X, x\neq y\}} {|u(x)-u(y)|^p} \rho_{n}(x, y)\,\d\mm(x) \d\mm(y)\\
&= \iint_{\{(x, y):0<\d(x,y)<R\}} {|u(x)-u(y)|^p} \rho_{n}(x, y)\,\d\mm(x) \d\mm(y),\end{split}\]
which goes to $0=2L\|u\|_{L^p}^p$ as $n\to\infty$ by Assumption \ref{assumption1} B).
\end{remark}

 \subsection{Applications and Examples}\label{sect:appl}
In this section we introduce some examples satisfying Assumption \ref{assumption1} and Theorem \ref{th:MS2}. Doing so, we not only extend Maz'ya and  Shaposhnikova's formula on Euclidean spaces (as well as Ludwig's theorem on finite dimensional Banach spaces) to a large family of mollifiers on curved spaces, but  also  find some new sharp functional inequalities.
 
 \subsubsection*{Volume growth conditions}
            
       
  
From now on, $V:[0,+\infty)\to[0,+\infty)$ will be a $C^1$ strictly increasing function such that $\lmt{t}{\infty} V(t)=+\infty$. We will also use the notation $S(t):=\frac{\d}{\dt}V(t)$.\\
Some prototype examples (see e.g \cite{sturm}) arising from Riemannian geometry are for instance given by the family of functions $V^{K, N}(t)$ on $[0, +\infty)$  defined by
 \begin{equation}\label{impV}
V^{K, N}(t):=\left\{
         \begin{array}{lll}
            \displaystyle  t^N~~&& \text{if}~K=0,\quad N=1,2,\dots\\
            
            \displaystyle \int_0^t\sinh^{N-1}\left(r\sqrt{\frac{-K}{N-1}}\right)\,\d r~~&& \text{if}~K<0,\quad N=2,3,\dots\\
            \end{array}  
        \right.
\end{equation}
       
   \begin{definition}[Generalized Bishop--Gromov inequality]\label{assumption2}
   We say that a  metric measure space $\ms$ satisfies the  \emph{generalized Bishop--Gromov inequality} associated to the function $V:[0,+\infty)\to[0,+\infty)$ if, for every $x_0\in X$,
\begin{equation*}
 \frac {\mm\big (B_R(x_0)\big)} {V(R)} \leq \frac {\mm\big (B_r(x_0)\big)} {V(r)}~~~~\text{for all}~~0< r\leq R<+\infty. 
\end{equation*}
   \end{definition}
  \medskip
  
\begin{definition} \label{def:avr}
We say that a  metric measure space $\ms$ admits  the \emph{generalized asymptotic volume ratio} associated to the function $V:[0,+\infty)\to[0,+\infty)$ if there exists ${\rm AVR}_{\ms}^{V}\in [0,+\infty]$ s.t., for every $x_0\in X$,
\begin{equation*}
{\rm AVR}_{\ms}^{V}=\mathop{\lim}_{r \uparrow +\infty} \frac {\mm\big (B_r(x_0)\big)} {V(r)}.
\end{equation*}
\end{definition}
\medskip
\begin{definition} \label{def:den}
Let $\ms$ be  a  metric measure space.  Its  density function $\theta^V$ associated to $V$ is defined as
\begin{equation*}
\theta^V(x):=\mathop{\lim}_{r \downarrow 0} \frac {\mm\big (B_r(x)\big)} {V(r)}~\qquad\forall x\in X.
\end{equation*}
\end{definition}

\begin{remark}
For spaces satisfying the generalized Bishop--Gromov inequality, we can see that ${\rm AVR}_{\ms}^V$ and $\theta^V$ are both well-defined. In particular, generalized Bishop--Gromov inequality implies that the map $r \mapsto  \frac {\mm\big (B_r(x)\big)} {V(r)}$ is non-increasing, so that ${\rm AVR}_{\ms}^{V}>0$ implies that $\theta(x)>0$ for any $x\in X$,  while $\theta\in L^\infty$ implies that $\frac {\mm\big (B_r(x)\big)} {V(r)}$ is uniformly bounded in $x\in X$ and $r>0$.
\end{remark}
\begin{remark}
It is very easy to construct examples of spaces with bounded density satisfying the generalized Bishop--Gromov inequality associated to a function $V$ which is different from the standard models $V^{K,N}$.\\ Consider for instance $X=\R$, equipped with the Lebesgue measure $\mathcal L^1$. \linebreak Let $V:[0,+\infty)\to[0,+\infty)$ be invertible and of class $C^1$ (in particular $V(0)=0$,\linebreak $\lim_{x\to\infty}V(x)=+\infty$ and $V$ is strictly increasing). Suppose in addition that $V^{-1}:[0,+\infty)\to[0,+\infty)$ is sub-additive, which means
\[V^{-1}(a+b)\leq V^{-1}(a)+V^{-1}(b).\]
Standard examples of such maps are $V(x)=x^\alpha$ (with $\alpha\geq 1)$ or $V(x)=e^x-1$.
Under this conditions it is easy to see that the map $\d:X\times X\to[0,+\infty)$
\[\d(x,y):=V^{-1}(|x-y|)\]
is a distance on $X$.
Moreover, for each $x\in X$, 
\[\begin{split}
    &\frac{\mathcal L^1(B_r(x))}{V(r)}=\frac{\mathcal L^1(\{y\in\R: V^{-1}(|x-y|)<r\})}{V(r)}\\
    &=\frac{\mathcal L^1(\{y\in\R:|x-y|<V(r)\})}{V(r)}=\frac{2V(r)}{V(r)}=2.
\end{split}\]
In particular the space $(\R, \d, \mathcal L^1)$ has density $\theta^V\equiv 2$ and satisfies (trivially) the generalized Bishop--Gromov inequality.
\end{remark}

\begin{example}\label{example}
The range of spaces satisfying the generalized Bishop--Gromov inequality and admitting the generalized
asymptotic volume ratio  is actually pretty wide. We list below some relevant examples:

\begin{itemize} 
 
\item [1)]  {\bf Euclidean spaces}: It is known that ${\rm AVR}^{V^{0,N}}_{(\R^N, |\cdot|, \mathcal L^N)}=\omega_N=\frac {|S^{N-1}|} N$ where $\omega_N=\frac{\pi^{\frac N 2}}{\Gamma(\frac N2+1)}$ denotes the volume of an $N$-dimensional unit ball and $|S^{N-1}|$ denotes its surface measure. This is the case studied in Maz'ya-Shaposhnikova's original paper \cite[Theorem 3]{MS}.

\item [2)]  {\bf Finite dimensional Banach spaces}:  Let $(\R^N, \| \cdot \|, \mathcal L^N)$ be an $N$-dimensional Banach space. It can be seen  ${\rm AVR}^{V^{0,N}}_{(\R^N, \|\cdot\|, \mathcal L^N)}$ is the volume of its unit ball. This is the case studied by Ludwig  \cite[Theorem 2]{Ludwig14}.

\item [3)] {\bf  Riemannian manifolds}: 
Let $(M^N, g)$ be a complete Riemannian manifold of dimension $N$ with ${\rm Ric}\geq K$. 
Let $\d$ and $\mm$ be the Riemannian distance  and the volume measure determined by $g$ respectively.
By the  classical Bishop\textendash Gromov volume comparison theorem, the generalized Bishop--Gromov inequality and the generalized asymptotic volume ratio associated to $V^{K,N}$ exist. In particular, when $K=0$, ${\rm AVR}^{V^{0, N}}_{(M^N, \d, \mm)}<+\infty$ if and only if $(M^N, \d, \mm)$ has Euclidean volume growth.

\item[4)] {\bf Carnot groups}: Let $\mathbb{G}=(\mathbb{R}^d,\cdot)$ be a Carnot group of step $s$ endowed with the Carnot\textendash Carath\'eodory distance $\d_{cc}$ and the Lebesgue measure $\mathcal{L}^d$. It is well known that
$\mathcal{L}^d(B_r(x))=r^{Q} \mathcal{L}^d(B_1(0))$ where $Q\in \N$ is the so called homogeneous dimension of $\mathbb{G}$. It is then clear that then ${\rm AVR}^{V^{0,Q}}_\mathbb{G}=\mathcal{L}^d(B_1(0))>0$.

\item[5)] {\bf $\mcpkn$ spaces}:
Let $\ms$ be a metric measure space satisfying the so-called Measure Contraction Property {\rm MCP}$(K, N)$,  a property  introduced independently by Ohta \cite{Ohta-MCP} and Sturm \cite{S-O2},  as a generalization of $\cdkn$ metric measure spaces.
By generalized Bishop\textendash Gromov volume growth inequality (cf. \cite[Theorem 2.3]{S-O2}) on {\rm MCP}$(K, N)$ spaces,    the (generalized) asymptotic volume ratio is well-defined.

\end{itemize}
\end{example}
\medskip
 \subsubsection*{Mollifiers of radial type}
Let $\ms$ be a metric measure space and let $(\rho_n)_n$ be a family of functions defined on $\{(x,y)\in X\times X:x\neq y\}$. We are now going to introduce some conditions which are easier and more manageable than the ones in Assumption \ref{assumption1}.
 \begin{condition}[Approximation of the identity]\label{assumption3}
 We say that $(\rho_n)_{n \in \N}$ are mollifiers satisfying \emph{approximation of the identity of radial type} associated to an increasing map $V:[0,+\infty)\to[0,+\infty)$, if there exists a  sequence of  strictly decreasing functions $\tilde\rho_n\in C^1(0, +\infty)$ such that
 \begin{itemize}
\item [A)] ({\bf Radial distribution}) 
\begin{equation}\label{limit}
\lim_{n\to\infty} \tilde\rho_n(r)=0~~~\forall~ r\in (0,+\infty),
\end{equation} 

\begin{equation}\label{limit2}
\lim_{r\to +\infty} \tilde\rho_n(r) {V}(r) =0~~~\forall ~ n\in \N,
\end{equation}
and 
\[
\rho_n (x, y)=\tilde\rho_n \big( \d(x, y)\big)~~~\forall \ x,y\in X,\ x\neq y.
\]

\item [B)] ({\bf Monotonicity}) For any $n,m\in\mathbb{N}$ with $n >m$, 
\[
(0, +\infty) \ni  r \to \frac{\tilde \rho_n(r)}{\tilde \rho_m(r)} ~~\text{is non-decreasing}.
\]

\item [C)]  ({\bf Approximation of the identity}) 
It holds 
 \begin{equation}\label{eq1:assumption3}
\lmt{R}{+\infty}\lmt{n}{\infty}\int_{R}^{+\infty}  S(r)\tilde \rho_n(r) \,\d r=1,
 \end{equation}
 where $S(t):=\frac{\d}{\d t}V(t)$.
\end{itemize}
\end{condition}

\medskip
\begin{lemma}\label{lemma1}
Let $\ms$ be a metric measure space admitting generalized asymptotic volume ratio ${\rm AVR}_{\ms}^{V}$ and suppose also that there exists a constant $k>0$ such that $\mm(B_r(x))\leq k\, V(r)$ for every $x\in X$ and every $r>0$ sufficiently large. Let $(\rho_n)_{n\in\N}$ be mollifiers satisfying approximation of the identity of radial type associated to $V$. 
Then $(\rho_n)_{n\in\N}$ satisfy Assumption \ref{assumption1} with $L={\rm AVR}_{\ms}^{V}$.
\end{lemma}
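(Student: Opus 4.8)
The plan is to verify the three items A), B), C) of Assumption \ref{assumption1} directly, with $L={\rm AVR}_{\ms}^{V}$. The whole argument rests on rewriting the tail integral in purely radial terms. Writing $m_x(r):=\mm(B_r(x))$ and using that $\tilde\rho_n\in C^1(0,+\infty)$ is strictly decreasing with $\tilde\rho_n(r)\to 0$ as $r\to+\infty$ (a consequence of \eqref{limit2} together with $V(r)\to+\infty$), I would represent $\tilde\rho_n(s)=\int_s^{+\infty}(-\tilde\rho_n'(t))\,\d t$ and apply Tonelli's theorem to obtain, for every $x\in X$ and $R>0$,
\[
F_n(x,R):=\int_{B^c_R(x)}\tilde\rho_n(\d(x,y))\dm(y)=\int_R^{+\infty}\big(-\tilde\rho_n'(r)\big)\,\big(m_x(r)-m_x(R)\big)\,\d r .
\]
Applying the same computation to the model measure of radial density $S=V'$ gives the companion identity $G_n(R):=\int_R^{+\infty}S(r)\tilde\rho_n(r)\,\d r=\int_R^{+\infty}(-\tilde\rho_n'(r))\,(V(r)-V(R))\,\d r$, where the boundary term at infinity is killed by \eqref{limit2} and, for $F_n$, by the growth bound $m_x(r)\le k\,V(r)$. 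These two identities are the backbone of everything that follows.

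For item A) I would exploit the asymptotic volume ratio. Fix $x$ and $\eps>0$; since $m_x(r)/V(r)\to {\rm AVR}_{\ms}^{V}=:A$, there is $R_0$ with $|m_x(r)-A\,V(r)|\le \eps\,V(r)$ for $r\ge R_0$. Substituting $m_x(r)-m_x(R)=A(V(r)-V(R))+(e(r)-e(R))$ with $|e|\le\eps V$ into the identity for $F_n$, I get $F_n(x,R)=A\,G_n(R)+\mathrm{Err}_n(R)$ with $|\mathrm{Err}_n(R)|\le \eps\,\big(G_n(R)+2V(R)\tilde\rho_n(R)\big)$ for $R\ge R_0$. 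Letting $n\to\infty$, the term $V(R)\tilde\rho_n(R)$ vanishes by \eqref{limit}, while $\lim_n G_n(R)$ exists by Condition \ref{assumption3}-C); hence both $\lmts{n}{\infty}F_n(x,R)$ and $\lmti{n}{\infty}F_n(x,R)$ lie between $(A-\eps)\lim_n G_n(R)$ and $(A+\eps)\lim_n G_n(R)$. Sending $R\to+\infty$ and using \eqref{eq1:assumption3}, i.e. $\lim_R\lim_n G_n(R)=1$, and then $\eps\downarrow 0$, forces $\lmt{R}{+\infty}\lmts{n}{\infty}F_n(x,R)=\lmt{R}{+\infty}\lmti{n}{\infty}F_n(x,R)=A$, which is exactly \eqref{eq1:assumption1} with $L=A$.

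Items C) and B) are then short. For C), when $R\ge r_0$ (the threshold in the uniform growth bound) the identity for $F_n$ and $m_x\le kV$ give $F_n(x,R)\le k\int_R^{+\infty}(-\tilde\rho_n'(r))V(r)\,\d r=k\,\big(G_n(R)+V(R)\tilde\rho_n(R)\big)$, and the right-hand side is bounded uniformly in $x$ and $n$ because $(G_n(R))_n$ converges and $\tilde\rho_n(R)\to 0$, a convergent sequence being bounded; this furnishes the constant $C(R)$ in \eqref{eq3:assumption1}. For B), the monotonicity Condition \ref{assumption3}-B) is the decisive tool: for $n>n_0$ and $0<r\le R$ it yields $\tilde\rho_n(r)/\tilde\rho_{n_0}(r)\le \tilde\rho_n(R)/\tilde\rho_{n_0}(R)$, hence $\rho_n\le \frac{\tilde\rho_n(R)}{\tilde\rho_{n_0}(R)}\,\rho_{n_0}$ on $\{0<\d(x,y)<R\}$. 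Integrating,
\[
\iint_{\{0<\d(x,y)<R\}}|u(x)-u(y)|^p\rho_n(x,y)\dm(x)\dm(y)\le \frac{\tilde\rho_n(R)}{\tilde\rho_{n_0}(R)}\,\E_{n_0}(u),
\]
and since $\tilde\rho_{n_0}(R)>0$ is fixed while $\tilde\rho_n(R)\to 0$ by \eqref{limit}, the left-hand side tends to $0$, establishing \eqref{eq2:assumption1}.

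The main obstacle is item A): one must compare the genuine radial distribution $m_x$ of the space with the model profile $V$ and convert the merely asymptotic information $m_x\sim A\,V$ into quantitative control of $F_n$ before the two limits in $n$ and $R$ are taken. The delicate points are the uniform vanishing of the boundary contribution $V(R)\tilde\rho_n(R)$ as $n\to\infty$ and the fact that the $\eps$-error is itself dominated by $G_n(R)$, so that it only disappears in the correct order of limits dictated by Condition \ref{assumption3}-C); by contrast, the layer-cake/Tonelli justification of the basic identity, and the harmless role of the spheres $\{\d(x,y)=r\}$ (negligible after dividing by $V$ and using continuity of $V$), are routine.
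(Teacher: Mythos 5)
Your proposal is correct and follows essentially the same route as the paper: your Tonelli identity $F_n(x,R)=\int_R^{+\infty}(-\tilde\rho_n'(r))\,(m_x(r)-m_x(R))\,\d r$ is exactly what the paper obtains via Cavalieri's formula, change of variables and integration by parts (with \eqref{limit2} killing the boundary term), and your treatment of A) via the $\eps$-approximation $|m_x - {\rm AVR}_{\ms}^{V}\,V|\le\eps V$, of B) via the monotonicity of $\tilde\rho_n/\tilde\rho_{n_0}$, and of C) via the bound $m_x\le kV$ matches the paper's proof step for step. The only (harmless) cosmetic differences are that you derive the key identity by Fubini--Tonelli rather than by the layer-cake formula, and your absolute-error formulation handles the case ${\rm AVR}_{\ms}^{V}=0$ without the separate remark the paper makes.
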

\begin{proof} 
{\sl Assumption \ref{assumption1}-A)}:

First of all, notice that ${\rm AVR}_{\ms}^V$ is finite by our assumption on $k$.
For simplicity, we assume that ${\rm AVR}_{\ms}^{V}>0$, the case for ${\rm AVR}_{\ms}^{V}=0$ can be proved in the same way.  
Fix $x\in X$.  For any $\epsilon>0$, there is $R_0>0$ such that 
\begin{equation}\label{avr}(1-\epsilon){\rm AVR}_{\ms}^{V}  {V}(r) \leq \mm\big(B_r(x) \big) \leq (1+\epsilon) {\rm AVR}_{\ms}^{V} {V}(r),~~~\forall r\geq R_0.\end{equation}
For simplicity, we  write
\[
\mm\big(B_r(x) \big) =\big (1+O(\epsilon)\big) {\rm AVR}_{\ms}^{V}  {V}(r).
\]
Let $R\in (R_0, +\infty)$. For any $n\in \N$ we define  $\bar \rho_{n,R}: X\times X\to (0, \tilde \rho_n(R)]$ as
\[
\bar \rho_{n,R}(x, y):=\left\{\begin{array}{ll}
 \rho_n(x, y)~~~\d(x,y)\geq R,\\
\\
\tilde \rho_n(R)~~~~~~0\leq \d(x,y)< R.
\end{array}
\right.
\] 
By  Cavalieri's formula (cf. \cite[Chapter 6]{AT-T}) we can write
\begin{eqnarray*}
&&\int_{B^c_R(x) }  \rho_n(x, y) \,\d \mm(y)+\tilde \rho_n(R) \mm\big(B_R(x) \big) \\
&=& \int_{X }  \bar \rho_{n,R}(x, y) \,\d \mm(y) \\
(\text{Cavalieri's formula})~&=&  \int_{0}^{+\infty} \mm \big(\{\bar \rho_{n,R}(x, y)>r\} \big) \,\d r\\
&=& \int_0^{\tilde \rho_n(R)} \mm\big(B_{\tilde \rho_n^{-1}(r)}(x) \big) \, \d r\\
&=& \int_0^{\tilde \rho_n(R)}  \big (1+O(\epsilon)\big) {\rm AVR}_{\ms}^{V}{V}\big({\tilde \rho_n^{-1}(r)} \big) \, \d r\\
(\text{let} ~t=\tilde \rho_n^{-1}(r)) ~~&=&\big (1+O(\epsilon)\big)  {\rm AVR}_{\ms}^{V} \int_{+\infty}^{R} {V}(t)  \tilde \rho'_n(t)\, \d t\\
&=&  \big (1+O(\epsilon)\big)  {\rm AVR}_{\ms}^{V} \left(\tilde \rho_n(R) {V}(R) -\int_{+\infty}^{R}  {S}(t)  \tilde \rho_n(t)\, \d t \right)
\end{eqnarray*}
where we use integration by parts formula and \eqref{limit2} in the last equality. 
So, using \eqref{avr},
\begin{eqnarray*}
&&\int_{B^c_R(x) }  \rho_n(x, y) \,\d \mm(y)\\&=&2O(\epsilon) {\rm AVR}_{\ms}^{V} \tilde \rho_n(R) {V}(R) +\big (1+O(\epsilon)\big) {\rm AVR}_{\ms}^{V}\int_{R}^{+\infty}  S(r)\tilde \rho_n(r) \,\d r.
\end{eqnarray*}
Letting $n\to \infty,\, R\to +\infty$ and $\epsilon \to 0$, by \eqref{limit} and \eqref{eq1:assumption3},  we get
\[\lmt{R}{+\infty} \lmts{n}{\infty}\int_{B^c_R(x) }  \rho_n(x, y) \,\d \mm(y)=  {\rm AVR}_{\ms}^V=\lmt{R}{+\infty} \lmti{n}{\infty}\int_{B^c_R(x) }  \rho_n(x, y) \,\d \mm(y)\]
which is the thesis.

\medskip

{\sl Assumption \ref{assumption1}-B)}:

Let $u\in L^p(X)$ be such that $\E_{n_0}(u)<+\infty$ for some $n_0\in\N$. For any $R>0$ and any $n>n_0$, by  assumption B) of Condition \ref{assumption3}, it holds
\begin{eqnarray*}
&&  \int_{X} \left( \int_{B_R(y)\setminus\{y\}} {|u(x)-u(y)|^p}{\rho_{n_0}(x, y)} \frac{\rho_{n}(x, y)}{\rho_{n_0}(x, y)}\,\d \mm( x)\right )\,\d \mm(y)\\
&\leq &   \int_{X} \left( \int_{B_R(y)\setminus\{y\}} {|u(x)-u(y)|^p}{\rho_{n_0}(x, y)} \frac{\tilde \rho_{n}(R)}{\tilde\rho_{n_0}(R)}\,\d \mm( x) \right )\,\d \mm(y)\\
&\leq&  \E_{n_0}(u)  \frac{\tilde \rho_{n}(R)}{\tilde\rho_{n_0}(R)}.
\end{eqnarray*}
Then \eqref{eq2:assumption1} follows from \eqref{limit}.
\medskip 

{\sl Assumption \ref{assumption1}-C)}:

Proceeding as in the first part of the proof, we get that, for $R$ sufficiently large,
\[\begin{split}
    \int_{B^c_R(x)}\rho_n(x,y)\,\d\mm (y)&= \int_0^{\tilde\rho_n(R)}\mm(B_{\tilde\rho_n^{-1}(r)}(x))\,\d r-\tilde\rho_n(R)\mm (B_R(x))\\
    &\leq \int_0^{\tilde\rho_n(R)}\mm(B_{\tilde\rho_n^{-1}(r)}(x))\,\d r\\
    &\leq k\int_0^{\tilde\rho_n(R)}V(\tilde\rho_n^{-1}(r))\,\d r\\
    &=-k\int_R^{+\infty}V(s)\,\tilde\rho_n'(s)\,\d s\\
    &\stackrel{\eqref{limit2}}=k\left[V(R)\,\tilde\rho_n(R)+\int_R^{+\infty}S(s)\tilde\rho_n(s)\,\d s\right]
\end{split}\]
The last quantity does not depend on $x$ and, by assumptions \eqref{limit} and \eqref{eq1:assumption3}, it is bounded in $n$, completing the proof.
\end{proof}
\begin{remark}
    Note that the conditions on the space required in Lemma \ref{lemma1} (and in the following Theorem \ref{th:MS}) are fulfilled for instance if $\ms$ satisfies a generalized Bishop--Gromov inequality and has bounded density function. In the final examples we will often refer to these stronger conditions.
\end{remark}
\begin{remark}
    Notice also that a space with bounded diameter admits always generalized asymptotic volume ratio and, if $\mm (X)<+\infty$,  then ${\rm AVR}_{\ms}^V=0$ necessarily. Moreover, the proof of Lemma \ref{lemma1} reduces to the task of proving Assumption \ref{assumption1}-B), since $L={\rm AVR}^V_{\ms}=0$ and A) and C) are automatically satisfied, as already observed. Notice that in this case also the second condition required in Lemma \ref{lemma1} is always fulfilled.
\end{remark}
\bigskip
 
We construct now an interesting class of mollifiers satisfying  approximation of the identity of radial type in Condition \ref{assumption3}.

\begin{lemma}\label{examples} 
Let $(a_n)_{n\in \N}$ be a strictly decreasing sequence converging to 0 and let $f:(0,+\infty)\to\R$ be a positive increasing $C^2$ function which satisfies the following properties: 
\begin{equation}\label{1ip}\lim_{s\to\infty}f(s)=+\infty;\end{equation}
\begin{equation}\label{2ip}
    \lim_{s\to\infty}\frac{sf'(s)}{[f(s)]^r}=0\quad\text{for }r>1;
\end{equation}
\begin{equation}\label{3ip}
    s\mapsto \frac{f'(s)}{[f(s)]^r} \quad \text{is strictly decreasing for }r>1.
\end{equation}
 Then the mollifiers $\tilde \rho_n:(0,+\infty)\to\mathbb R$ defined by 
\begin{equation}\label{mollifiers}\tilde \rho_n(t):=\frac{a_n\,f'(V(t))}{[f(V(t))]^{a_n+1}}\end{equation}
satisfy the requests in Condition \ref{assumption3}.
\end{lemma}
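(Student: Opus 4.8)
The plan is to verify the three parts A), B), C) of Condition~\ref{assumption3} one at a time, directly from the formula \eqref{mollifiers}, using throughout the fact that every exponent $a_n+1$ is strictly larger than $1$, so that hypotheses \eqref{2ip} and \eqref{3ip} apply with $r=a_n+1$. First I would dispose of the structural requirements on $\tilde\rho_n$: since $f\in C^2$ and $V\in C^1$ with $f\circ V$ positive, the function $t\mapsto \tilde\rho_n(t)$ is of class $C^1(0,+\infty)$; and it is strictly decreasing because it is $a_n$ times the composition of the strictly decreasing map $s\mapsto f'(s)/[f(s)]^{a_n+1}$ (this is exactly \eqref{3ip} with $r=a_n+1$) with the strictly increasing map $V$.

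For part A), I would fix $r\in(0,+\infty)$ and note that in $\tilde\rho_n(r)=a_n f'(V(r))/[f(V(r))]^{a_n+1}$ the argument $V(r)$ is a fixed positive number; as $n\to\infty$ one has $a_n\to0$ while $[f(V(r))]^{a_n+1}\to f(V(r))>0$ by continuity, which yields \eqref{limit}. For \eqref{limit2} I would substitute $s=V(r)$, which tends to $+\infty$ as $r\to+\infty$, and write $\tilde\rho_n(r)V(r)=a_n\, s f'(s)/[f(s)]^{a_n+1}$; this tends to $0$ directly from \eqref{2ip}. Part B) is a short computation: for $n>m$, hence $a_m>a_n$, the ratio telescopes to $\tilde\rho_n(r)/\tilde\rho_m(r)=(a_n/a_m)\,[f(V(r))]^{a_m-a_n}$. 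Since the exponent $a_m-a_n$ is strictly positive and $f\circ V$ is increasing, this is a non-decreasing (indeed increasing) function of $r$, as required.

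The heart of the argument is part C). Here I would change variables $s=V(r)$, so that $S(r)\,\d r=V'(r)\,\d r=\d s$, turning the integral into $\int_{V(R)}^{+\infty} a_n f'(s)/[f(s)]^{a_n+1}\,\d s$, and then recognize the integrand as the exact derivative $-\tfrac{\d}{\d s}[f(s)]^{-a_n}$. Integrating leaves $[f(V(R))]^{-a_n}$ minus a boundary term at infinity which vanishes by \eqref{1ip} (because $f(s)\to+\infty$ and $a_n>0$). Thus the inner integral equals exactly $[f(V(R))]^{-a_n}$, and letting $n\to\infty$ sends $a_n\to0$, so it converges to $1$ for \emph{every} fixed $R$; the outer limit in \eqref{eq1:assumption3} is then trivially $1$. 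The only points deserving care are the convergence of the improper integral---guaranteed by the existence of the finite antiderivative---and the bookkeeping that matches \eqref{2ip} and \eqref{3ip} to the admissible exponents $a_n+1>1$; apart from recognizing the exact primitive in C), no step presents a genuine obstacle.
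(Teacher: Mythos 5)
Your proof is correct and follows essentially the same route as the paper's: monotonicity from \eqref{3ip} with $r=a_n+1$, the pointwise limits \eqref{limit} and \eqref{limit2} from $a_n\to 0$ and \eqref{2ip} respectively, the telescoped ratio $(a_n/a_m)[f(V(t))]^{a_m-a_n}$ for part B), and the exact primitive $-[f(V(t))]^{-a_n}$ for part C). The only cosmetic difference is that you make the substitution $s=V(r)$ explicit before recognizing the antiderivative, whereas the paper integrates directly in the variable $t$; the computations are identical.
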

\begin{proof}
First of all, $\tilde\rho_n$ are strictly decreasing: this follows by \eqref{3ip} since $a_n>0$ and $V$ is a strictly increasing function.\\
Moreover \eqref{limit} follows by the fact that $a_n\to 0$, while we get \eqref{limit2} since
\[\tilde \rho_n(t)V(t)=\frac{a_n\,f'(V(t) )V(t)}{[f(V(t))]^{a_n+1}}\]
combined with \eqref{2ip} and the fact that $V(t)\to+\infty$ as $t\to\infty$.\\
Regarding condition B), for $n>m$ we compute
\[\frac{\tilde\rho_n(t)}{\tilde\rho_m(t)}=\frac{a_n}{a_m}[f(V(t))]^{a_m-a_n},\]
which is non-decreasing since $a_n<a_m$ and both $f$ and $V$ are increasing functions. Finally, we conclude by observing
\[\int_\delta^{+\infty} S(t)\tilde\rho_n(t)\,dt= \Big[-[f(V(t))]^{-a_n}\Big]_\delta^{+\infty}=[f(V(\delta))]^{-a_n}\stackrel{n\to+\infty}\longrightarrow 1.\qedhere\]
\end{proof}
\begin{remark}
Conditions \eqref{1ip}-\eqref{3ip} are satisfied by plenty of examples: for instance $f(s)=s^\alpha$ for $\alpha>0$, $f(s)=e^s$ or $f(s)=\ln(s)$ (on the half-line $(1,+\infty)$).\\
In particular, considering  $f(s)=s^{\alpha}$ we obtain the family of mollifiers
\begin{equation}\label{es1}\tilde\rho_n(t)=\frac{a_n\,\alpha}{[V(t)]^{\alpha a_n +1}}\qquad \alpha>0.\end{equation}
Notice that, if we choose $V(t)=V^{0,N}(t)=t^N$ and $\alpha=1/N$, we get $\tilde \rho_n(t)=\frac{a_n}{Nt^{a_n+N}}$, which is the standard family of mollifiers which appears in \eqref{MS:intro}.\\
In addition, considering $f(s)=e^s$, we get the mollifiers
\begin{equation}\label{es2}\tilde\rho_n(t)=\frac{a_n}{e^{a_n V(t)}},\end{equation}
while taking $f(s)=\ln s$ we derive

\begin{equation}\label{es3}\tilde\rho_n(t)=
                      \frac{a_n}{\big(\ln (V(t))\big)^{a_n+1} V(t)}\end{equation}
                      on the half-line $(V^{-1}(1),+\infty)$.

\end{remark}

\subsubsection*{Asymptotic formulas}
Combining Theorem \ref{th:MS2} and Lemma \ref{lemma1},  we   get the following asymptotic formulas.

\begin{theorem}\label{th:MS}Let $\ms$ be a metric measure space admitting generalized asymptotic volume ratio ${\rm AVR}_{\ms}^V$ and suppose also there exists a constant $k>0$ such that 
\begin{equation}\label{volumebound}
    \mm(B_r(x))\leq k\, V(r)
\end{equation}
for every $x\in X$ and every $r>0$ sufficiently large. Let $(\rho_n)_{n\in\N}$ be mollifiers of radial type satisfying Condition \ref{assumption3}. 
Then for any  $u \in L^p$ such that
$\E_{n_0}(u)<+\infty$ for a certain $n_0\in \N$, it holds 
\begin{equation*}
\mathop{\lim}_{n\to \infty} \E_n(u)=2{\rm AVR}_{\ms}^V\|u\|^p_{L^p}.
\end{equation*}
\end{theorem}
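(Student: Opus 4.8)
The plan is to obtain this statement as a direct composition of the two results already established in this section, namely Lemma \ref{lemma1} and Theorem \ref{th:MS2}. The crucial point is that the hypotheses imposed here are, almost verbatim, the hypotheses of Lemma \ref{lemma1}: the space admits the generalized asymptotic volume ratio ${\rm AVR}_{\ms}^V$, the uniform upper bound $\mm(B_r(x))\leq k\,V(r)$ holds for all $x\in X$ and all sufficiently large $r$, and the mollifiers $(\rho_n)_{n\in\N}$ satisfy the approximation of the identity of radial type of Condition \ref{assumption3}. So the first step is simply to check that nothing more is needed and to apply Lemma \ref{lemma1}, which tells us that under exactly these assumptions the family $(\rho_n)_{n\in\N}$ satisfies Assumption \ref{assumption1}, with the constant there given by $L={\rm AVR}_{\ms}^V$.

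The second and final step is to invoke Theorem \ref{th:MS2}. Since $(\rho_n)_{n\in\N}$ now satisfies Assumption \ref{assumption1}, and since $u\in L^p$ with $\E_{n_0}(u)<+\infty$ for some $n_0\in\N$ — which is precisely the hypothesis under which \eqref{eq1:MS} is stated — the generalized Maz'ya--Shaposhnikova formula applies and yields $\lim_{n\to\infty}\E_n(u)=2L\|u\|_{L^p}^p$. Substituting the identification $L={\rm AVR}_{\ms}^V$ coming from Lemma \ref{lemma1} gives exactly the claimed limit $\lim_{n\to\infty}\E_n(u)=2{\rm AVR}_{\ms}^V\|u\|_{L^p}^p$, completing the argument.

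In this sense there is no genuine obstacle left at this stage: all the analytic labour has already been spent earlier, first in Lemma \ref{lemma1} (where the Cavalieri-formula computation together with the integration by parts converts the radial/AVR data into the three clauses A), B), C) of Assumption \ref{assumption1}) and then in Theorem \ref{th:MS2} (where the decomposition of $X\times X$ into the regions $A$, $B$, $C$ and the Fatou-type estimates produce the factor $2L$). The only points requiring minimal care are bookkeeping ones: confirming that the integrability condition $\E_{n_0}(u)<+\infty$ is transported unchanged from the statement into the application of Theorem \ref{th:MS2}, and verifying that the constant produced by Lemma \ref{lemma1} is indeed $L={\rm AVR}_{\ms}^V$ rather than any rescaling thereof. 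Thus Theorem \ref{th:MS} is best read as a corollary, recording the combined effect of the abstract formula and its verifiable sufficient conditions.
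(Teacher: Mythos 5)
Your proposal is correct and is exactly the paper's own argument: the paper introduces Theorem \ref{th:MS} with the words ``Combining Theorem \ref{th:MS2} and Lemma \ref{lemma1}, we get the following asymptotic formulas,'' i.e., the result is obtained precisely by applying Lemma \ref{lemma1} to verify Assumption \ref{assumption1} with $L={\rm AVR}_{\ms}^V$ and then invoking Theorem \ref{th:MS2}. Your bookkeeping checks (the hypotheses matching those of Lemma \ref{lemma1}, the integrability condition $\E_{n_0}(u)<+\infty$, and the identification of the constant) are exactly what is needed, so nothing is missing.
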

\medskip
\begin{remark}
 The assumptions in Theorem \ref{th:MS} are very general and they encompass several important examples. Indeed, as already remarked in Example \ref{example}, finite dimensional Banach spaces, Carnot groups, Riemannian manifolds with Ricci curvature bounded from below and MCP spaces all satisfy the general Bishop--Gromov inequality and admit the generalized asymptotic volume ratio. On the other hand a direct computation show that also \eqref{volumebound} is satisfied in Carnot groups and {finite dimensional Banach spaces}. Moreover, Theorem \cite[Theorem III.4.4]{Riem} and \cite{DPG-Non} show that \eqref{volumebound} holds in Riemannian manifolds with Ricci curvature bounded from below and non-collapsed RCD spaces respectively taking $V$ as in \eqref{impV}.
\end{remark}
\begin{remark}
Observe also that, if the space has finite measure, then assumptions in Theorem \ref{th:MS} are always satisfied and 
\[\lim_{n\to\infty}\E_n(u)=0\]
for every $u\in L^p(X)$ with $\E_{n_0}(u)<+\infty$ for a certain $n_0\in\N$.\\
For instance this holds if we consider the sphere $\mathbb S^n$ with the standard mollifiers used in \eqref{MS:intro}: this case is also discussed in \cite{kreuml}.
\end{remark}
In particular we can apply Theorem \ref{th:MS} to the families of mollifiers described in Lemma \ref{examples}. For instance, considering \eqref{es1} with $a_n=sp$, we get the following asymptotic formula:
\begin{example}
    Let $\ms$ be a metric measure space satisfying the generalized Bishop--Gromov inequality associated to $V$. Assume that its associated density function is bounded.  Then for any  $p\geq 1$, $\alpha>0$, if there exists $s_0\in (0, 1)$ and $u\in L^p\ms$ such that
\[
 \iint_{\{(x,y)\in X\times X: x\neq y\}} \frac{|u(x)-u(y)|^p}{[V(\d(x, y))]^{\alpha s_0 p+1}} \,\d \mm( x)\,\d \mm(y)<\infty,
\]
we have
\[
\mathop{\lim}_{s\downarrow 0} s  \iint_{\{(x,y)\in X\times X:x\neq y\}} \frac{|u(x)-u(y)|^p}{[V(\d(x, y))]^{\alpha sp+1}} \,\d \mm( x)\,\d \mm(y) =\frac {2}{\alpha p} {\rm AVR}^{V}_{\ms} \|u\|^p_{L^p}.
\]
In particular, taking $X=\R^N, V(t)= V^{0,N}(t)=t^N$ and $\alpha=1/N$, we recover exactly Maz’ya and Shaposhnikova's asymptotic formula \eqref{MS:intro} with $L=\frac{2N\omega_N}{p}$.
\end{example}
\medskip
Similarly, if we consider \eqref{es3} with $a_n=sp$, we get the following similar result. Notice that in this case we need to restrict to the set $\{(x,y):V\big(\d(x,y)\big)>1\}$, since the function $f$ is defined on $(1,+\infty)$. Nevertheless, the same proof of Theorem \ref{th:MS2} still works with the obvious changes.
\begin{example}\label{coro:MS}
Let $\ms$ be a metric measure space with bounded density, satisfying the generalized Bishop--Gromov inequality associated to $V$.  For any  $p\geq 1$, if there exists $s_0\in (0, 1)$ and $u\in L^p\ms$ such that
\[
 \iint_{V\big(\d(x, y)\big)>1} \frac{|u(x)-u(y)|^p}{\big[\ln V\big(\d(x, y)\big)\big]^{s_0p+1}  V(\d(x, y))} \,\d \mm( x)\,\d \mm(y)<\infty,
\]
we have
\[
\mathop{\lim}_{s\downarrow 0} s  \iint_{V\big(\d(x, y)\big)>1} \frac{|u(x)-u(y)|^p}{\big[\ln V\big(\d(x, y)\big)\big]^{sp+1} V(\d(x, y))} \,\d \mm( x)\,\d \mm(y)=\frac {2}p {\rm AVR}^{V}_{\ms} \|u\|^p_{L^p}.
\]

\end{example}
Similar results can be obtained by considering mollifiers of different shape, as soon as they satisfy approximation of the identity of radial type, described in Condition \ref{assumption3}.\\\\
We conclude by pointing out another interesting example in which our theory can be applied: cf. \cite{DPG-Non} for the definition of non-collapsed ${\rm RCD}(0,N)$ space.

\bigskip
\begin{example}\label{rigidity1}
Let $(X, \d, \mathcal H^N)$ be a non-collapsed ${\rm RCD}(0, N)$ metric measure space with $N\in \N^*$. Then
\begin{equation}\label{eq1:rigidity1}
\mathop{\lim}_{s\downarrow 0} s\int_{X} \int_{X} \frac{|u(x)-u(y)|^p}{\d^{N+sp}(x, y)} \,\d \mm( x)\,\d \mm(y) \leq \frac {2N\omega_N} p  \|u\|^p_{L^p}
\end{equation}
for any $u\in L^p(X)$ which has finite associated energy for a certain $s=s_0$. 
Here $\omega_N$ denotes the volume of an $N$-dimensional unit ball. \\
If the equality in \eqref{eq1:rigidity1} is attained by a nowhere-zero function $u$, then  $\ms$ is  isometric to a metric cone over an ${\rm RCD}(N-2, N-1)$ space. 
\end{example}

\begin{proof}
Firstly, by Generalized Bishop\textendash Gromov volume growth inequality (cf. \cite[Theorem 2.3]{S-O2}) we know 
\begin{equation}\label{eq2:rigidity1}
 {{\rm AVR}}_{\ms}^{V^{0,N}} =\lmt{r}{\infty}\frac {\mm\big (B_r(x_0)\big)} {r^N}  \leq \lmt{R}{0} \frac {\mm\big (B_R(x_0)\big)} {R^N} = \omega_N,~~~\forall R>0.
\end{equation}
Combining with Theorem \ref{th:MS} we get \eqref{eq1:rigidity1}.\\
If the equality in \eqref{eq1:rigidity1} is attained by a nowhere-zero function $u$, by Theorem \ref{th:MS} and \eqref{eq2:rigidity1} we can see that 
\[
\frac {\mm\big (B_R(x_0)\big)} {R^N} =\omega_N,~~~\forall R>0.
\]
By \cite[Theorem 1.1]{DPG-F} we know $\ms$ is isometric to a metric 
cone over an ${\rm RCD}(N-2, N-1)$ space. 
\end{proof}

\def\cprime{$'$}


\begin{thebibliography}{10}

\bibitem{AT-T}
{\sc L.~Ambrosio and P.~Tilli}, {\em Topics on analysis in metric space},
  Oxford University Press, 2004.

\bibitem{MR4507709}
{\sc A.~Arroyo-Rabasa and P.~Bonicatto}, {\em A {B}ourgain-{B}rezis-{M}ironescu
  representation for functions with bounded deformation}, Calc. Var. Partial
  Differential Equations, 62 (2023), pp.~Paper No. 33, 22.

\bibitem{BBM}
{\sc J.~Bourgain, H.~Brezis, and P.~Mironescu}, {\em Another look at {S}obolev
  spaces}, in Optimal control and partial differential equations, IOS,
  Amsterdam, 2001, pp.~439--455.

\bibitem{MR4525722}
{\sc D.~Brazke, A.~Schikorra, and P.-L. Yung}, {\em
  Bourgain-{B}rezis-{M}ironescu convergence via {T}riebel-{L}izorkin spaces},
  Calc. Var. Partial Differential Equations, 62 (2023), pp.~Paper No. 41, 33.

\bibitem{MR3556344}
{\sc H.~Brezis and H.-M. Nguyen}, {\em The {BBM} formula revisited}, Atti
  Accad. Naz. Lincei Rend. Lincei Mat. Appl., 27 (2016), pp.~515--533.

\bibitem{MR3485124}
\leavevmode\vrule height 2pt depth -1.6pt width 23pt, {\em Two subtle convex
  nonlocal approximations of the {BV}-norm}, Nonlinear Anal., 137 (2016),
  pp.~222--245.

\bibitem{MR3749763}
\leavevmode\vrule height 2pt depth -1.6pt width 23pt, {\em Non-local
  functionals related to the total variation and connections with image
  processing}, Ann. PDE, 4 (2018), pp.~Paper No. 9, 77.

\bibitem{MR4011115}
\leavevmode\vrule height 2pt depth -1.6pt width 23pt, {\em Non-local,
  non-convex functionals converging to {S}obolev norms}, Nonlinear Anal., 191
  (2020), pp.~111626, 9.

\bibitem{MR4482042}
{\sc H.~Brezis, A.~Seeger, J.~Van~Schaftingen, and P.-L. Yung}, {\em Sobolev
  spaces revisited}, Atti Accad. Naz. Lincei Rend. Lincei Mat. Appl., 33
  (2022), pp.~413--437.

\bibitem{MR4275122}
{\sc H.~Brezis, J.~Van~Schaftingen, and P.-L. Yung}, {\em A surprising formula
  for {S}obolev norms}, Proc. Natl. Acad. Sci. USA, 118 (2021), pp.~Paper No.
  e2025254118, 6.

\bibitem{MR1942116}
{\sc K.~Brezis}, {\em How to recognize constant functions. {A} connection with
  {S}obolev spaces}, Uspekhi Mat. Nauk, 57 (2002), pp.~59--74.

\bibitem{MR4449863}
{\sc E.~Bru\`e, M.~Calzi, G.~E. Comi, and G.~Stefani}, {\em A distributional
  approach to fractional {S}obolev spaces and fractional variation: asymptotics
  {II}}, C. R. Math. Acad. Sci. Paris, 360 (2022), pp.~589--626.

\bibitem{MR4453966}
{\sc F.~Buseghin, N.~Garofalo, and G.~Tralli}, {\em On the limiting behaviour
  of some nonlocal seminorms: a new phenomenon}, Ann. Sc. Norm. Super. Pisa Cl.
  Sci. (5), 23 (2022), pp.~837--875.

\bibitem{MR1942130}
{\sc J.~D\'{a}vila}, {\em On an open question about functions of bounded
  variation}, Calc. Var. Partial Differential Equations, 15 (2002),
  pp.~519--527.

\bibitem{Davoli}
{\sc E.~Davoli, G.~Di~Fratta, and V.~Pagliari}, {\em Sharp conditions for the
  validity of the bourgain-brezis-mironescu formula}.
\newblock https://arxiv.org/abs/2302.05653, 2023.

\bibitem{DPG-F}
{\sc G.~De~Philippis and N.~Gigli}, {\em From volume cone to metric cone in the
  nonsmooth setting}, Geom. Funct. Anal., 26 (2016), pp.~1526--1587.

\bibitem{DPG-Non}
\leavevmode\vrule height 2pt depth -1.6pt width 23pt, {\em Non-collapsed spaces
  with {R}icci curvature bounded from below}, J. \'{E}c. polytech. Math., 5
  (2018), pp.~613--650.

\bibitem{DMS19}
{\sc S.~Di~Marino and M.~Squassina}, {\em New characterizations of {S}obolev
  metric spaces}, J. Funct. Anal., 276 (2019), pp.~1853--1874.

\bibitem{MR4685023}
{\sc N.~Garofalo and G.~Tralli}, {\em A
  {B}ourgain-{B}rezis-{M}ironescu-{D}\'{a}vila theorem in {C}arnot groups of
  step two}, Comm. Anal. Geom., 31 (2023), pp.~32--341.

\bibitem{MR4375837}
{\sc W.~G\'{o}rny}, {\em Bourgain-{B}rezis-{M}ironescu approach in metric
  spaces with {E}uclidean tangents}, J. Geom. Anal., 32 (2022), pp.~Paper No.
  128, 22.

\bibitem{Riem}
{\sc C.~Isaac}, {\em Riemannian geometry - a modern introduction- second ed.},
  Camb. studies, 2006.

\bibitem{kreuml}
{\sc A.~Kreuml and O.~Mordhorst}, {\em Fractional perimeters on the sphere},
  Discrete Contin. Dyn. Syst., 41 (2021), pp.~5439--5454.

\bibitem{LPZ2}
{\sc P.~Lahti, A.~Pinamonti, and X.~Zhou}, {\em {BV} functions and nonlocal
  functionals in metric measure spaces}.
\newblock https://browse.arxiv.org/abs/2310.08882, 2023.

\bibitem{LPZ1}
{\sc P.~Lahti, A.~Pinamonti, and X.~Zhou}, {\em A characterization of {BV} and
  {S}obolev functions via nonlocal functionals in metric spaces}, Nonlinear
  Anal., 241 (2024), p.~113467.

\bibitem{MR3986928}
{\sc N.~Lam, A.~Maalaoui, and A.~Pinamonti}, {\em Characterizations of
  anisotropic high order {S}obolev spaces}, Asymptot. Anal., 113 (2019),
  pp.~239--260.

\bibitem{MR2832587}
{\sc G.~Leoni and D.~Spector}, {\em Characterization of {S}obolev and {$BV$}
  spaces}, J. Funct. Anal., 261 (2011), pp.~2926--2958.

\bibitem{MR3132740}
\leavevmode\vrule height 2pt depth -1.6pt width 23pt, {\em Corrigendum to
  ``{C}haracterization of {S}obolev and {$BV$} spaces'' [{J}. {F}unct. {A}nal.
  261 (10) (2011) 2926--2958]}, J. Funct. Anal., 266 (2014), pp.~1106--1114.

\bibitem{Ludwig14}
{\sc M.~Ludwig}, {\em Anisotropic fractional {S}obolev norms}, Adv. Math., 252
  (2014), pp.~150--157.

\bibitem{MR3886626}
{\sc A.~Maalaoui and A.~Pinamonti}, {\em Interpolations and fractional
  {S}obolev spaces in {C}arnot groups}, Nonlinear Anal., 179 (2019),
  pp.~91--104.

\bibitem{MS}
{\sc V.~Maz'ya and T.~Shaposhnikova}, {\em On the {B}ourgain, {B}rezis, and
  {M}ironescu theorem concerning limiting embeddings of fractional {S}obolev
  spaces}, J. Funct. Anal., 195 (2002), pp.~230--238.

\bibitem{MR3299669}
{\sc V.~Munnier}, {\em Integral energy characterization of {H}aj\l
  asz-{S}obolev spaces}, J. Math. Anal. Appl., 425 (2015), pp.~381--406.

\bibitem{Ngu}
{\sc H.-M. Nguyen, A.~Pinamonti, M.~Squassina, and E.~Vecchi}, {\em Some
  characterizations of magnetic {S}obolev spaces}, Complex Var. Elliptic Equ.,
  65 (2020), pp.~1104--1114.

\bibitem{Ohta-MCP}
{\sc S.-i. Ohta}, {\em On the measure contraction property of metric measure
  spaces}, Comment. Math. Helv., 82 (2007), pp.~805--828.

\bibitem{MR3601583}
{\sc A.~Pinamonti, M.~Squassina, and E.~Vecchi}, {\em The
  {M}az'ya-{S}haposhnikova limit in the magnetic setting}, J. Math. Anal.
  Appl., 449 (2017), pp.~1152--1159.

\bibitem{Ponce}
{\sc A.~C. Ponce}, {\em A new approach to {S}obolev spaces and connections to
  {$\Gamma$}-convergence}, Calc. Var. Partial Differential Equations, 19
  (2004), pp.~229--255.

\bibitem{sturm}
{\sc K.-T. Sturm}, {\em On the geometry of metric measure spaces. {II}}, Acta
  Math., 196(1) (2006), pp.~133--177.

\bibitem{S-O2}
\leavevmode\vrule height 2pt depth -1.6pt width 23pt, {\em On the geometry of
  metric measure spaces. {II}}, Acta Math., 196 (2006), pp.~133--177.

\end{thebibliography}
\end{document}